\newtheorem{theorem}{Theorem}[section]
\newtheorem{lemma}[theorem]{Lemma}
\theoremstyle{definition}
\newtheorem{definition}[theorem]{Definition}
\theoremstyle{remark}
\newcommand{\NN}{ {\mathbb N} }
\newcommand{\CC}{{\mathbb C}}
\newcommand{\cB}{\mathcal{B}}
\newcommand{\cA}{\mathcal{A}}
\newcommand{\kk}{\kappa}
\title[Operator-valued $S$-transform]{A short proof for the twisted multiplicativity property of the operator-valued $S$-transform}
\author{Roland Speicher}
\address{%
Universit\"at des Saarlandes,
Fachrichtung Mathematik,
Postfach 151150,
66041 Saarbr\"ucken}
\email{speicher@math.uni-sb.de}
\dedicatory{Dedicated to the memory of J\"org Eschmeier}
\thanks{The author was partially supported by the SFB-TRR 195 of the German Research Foundation DFG}
\begin{document}

\begin{abstract}
We provide a short proof for the twisted multiplicativity property of the operator-valued $S$-transform.
\end{abstract}

\maketitle

\section{Introduction}
In free probability the most basic operations are the free addititive and multiplicative convolutions, given by the sum and the product, respectively, of two freely independent random variables $x$ and $y$. Voiculescu provided with the $R$-transform \cite{V-add} and the $S$-transform \cite{V-prod} analytic functions which describe these operations via $R_{x+y}(z)=R_x(z)+R_y(z)$ and $S_{xy}(z)=S_x(z)\cdot S_y(z)$; Haagerup provided in \cite{H} different proofs of Voiculescu's results, relying on Fock space and elementary Banach algebra techniques. Those functions can also be considered as formal power series; their coefficients are then determined in terms of the moments of the considered variables and the above relations are translations into generating series of how moments of the sum or the product of free variables are determined in terms of the moments of the individual variables. In the case of the additive convolution, this is quite straightforward, the coefficients of the $R$-transform are then the free cumulants. For the multiplicative case the situation is a bit more complicated, but still one can get the above mentioned multiplicativity from the basic properties of free cumulants and playing around with formal power series; for this we refer in particular to the book \cite{NSp}, which covers the combinatorial facet of free probability. Other general introductions to the basics of free probability can be found in \cite{VDN, MSp, VSW}.

There exists a (very powerful) operator-valued extension of free probability \cite
{V1} with its operator-valued versions of the additive and multiplicative free convolutions and of the $R$-transform and the $S$-transform. Whereas for the additive case and the $R$-transform, the statements and proofs can easily be extended to the operator-valued case, the multiplicative situation is more complicated. It was actually discovered by Dykema in \cite{D1}  that in this case the formula for the $S$-transform of a product of free variables involves a twist, due to the non-commutativity of the underlying algebra of ``scalars". Though Dykema uses the language of formal power series adapted to this operator-valued setting (formal multilinear function series), both his proofs in \cite{D1} and \cite{D2} use quite involved Fock space realizations, modeled according to Haagerup's approach in the scalar-valued case \cite{H}. Our goal here is to give a more direct proof of the twisted multiplicativity of the $S$-transform, just using the basic definitions and properties of free cumulants and of the $S$-transform, as well as easy formal manipulations with power series. This is in principle just an operator-valued adaptation of the same kind of arguments from \cite{RSp}. However, since the order matters now, finding the right way of writing and manipulating the formulas was not as straighforward as it might appear from the polished final write-up. So it could be beneficial for future use by others to record those calculations here.

Reconsidering the operator-valued $S$-transform was initiated by discussions with Kurusch Ebrahimi-Fard and Nicolas Gilliers on their preprint  \cite{EFG}. There they provide an ``understanding'' of the twist from an higher operadic point of view. Our calculations here can also be seen as a more pedestrian version of parts of their work. One should note that in their setting (as well as in Dykema's combinatorial interpretation via linked non-crossing partitions) the $T$-transform, which is the multiplicative inverse of the $S$-transform, seems to be the more appropriate object. For our formal manipulations with power series there is no such distinction.

\section{Basic definitions}
We are working in an operator-valued probability space $(\cA,\cB,E)$; this means that $\cA$ is a unital algebra with a unital subalgebra $\cB\subset\cA$ and a conditional expectation $E:\cA\to \cA$, i.e., a linear map with the additional properties
$$E[b]=b,\qquad E[b_1ab_2]=b_1 E[a] b_2\qquad\text{for all $b\in\cB$, $a_1,a_2\in\cA$}.$$
Elements in $\cA$ will be called (random) variables and the basic information about them is encoded in their operator-valued moments
$$E[xb_1xb_2\cdots b_nx]\qquad (n\in\NN; b_1,\dots,b_n\in\cB)$$
or, equivalently, in their operator-valued cumulants
$$\kk(xb_1,xb_2,\dots ,xb_n,x)\qquad (n\in\NN; b_1,\dots,b_n\in\cB).$$
The latter can be defined in a recursive way via
\begin{multline}\label{eq:recursive-cumulants}
E[xb_1xb_2\cdots b_nx]=\sum_{k=0}^n 
\sum_{1\leq q_1<q_2<\dots <q_k\leq n}\\
\kk\Bigl(x E[b_1x\cdots xb_{q_1}],x E[b_{q_1+1}x
\cdots x b_{q_2}],xE[\cdots],\dots,x\Bigr) \cdot E[b_{q_k+1}x\cdots b_nx]
\end{multline}
For more precise information about operator-valued cumulants one should consult Section 9 of \cite{MSp}.
The information about those moments and cumulants will actually be stored in formal power series, which are called
multilinear function series, and we will present below a definition of operator-valued cumulants in terms of these function series.

\begin{definition}
1) A \emph{multilinear function series} $F$ is given by as sequence $F=(F_n)_{n\in\NN_0}$, where each $F_n:\cB^{\times n}\to \cB$ is an $n$-linear map on the algebra $\cB$; with the convention that $\cB^{\times 0}=\CC$ and the corresponding $F_0$ is just a constant $F_0(1)$. 
Formally we write
$$F=\sum_{n\geq 0} F_n(b_1,b_2,\dots,b_n)=F_0+F_1(b_1)+F_2(b_1,b_2)+\cdots$$

2) For such multilinear function series $F=(F_n)_{n\in\NN_0}$ and $G=(G_n)_{n\in\NN_0}$ one has the obvious operation of a sum
$$F+G:=(F_n+G_n)_{n\in\NN_0},$$
but also a formal version of a product $F\cdot G$ and, if $G_0=0$, a composition $F\circ G$.
The product is defined for all $n\geq 0$ by
$$(F\cdot G)_n(b_1,\dots,b_n)=\sum_{k=0}^n F_k(b_1,\dots,b_k) G_{n-k}(b_{k+1},\dots,b_n).$$
If $G_0=0$, the composition $F\circ G$ is defined by $(F\circ G)_0=F_0$ and for $n\geq 1$ by
\begin{multline*}
(F\circ G)_n(b_1,\dots,b_n)=
\sum_{k=1}^n \sum_{\substack{p_1,\dots,p_k\\ p_1+\cdots +p_k=n}}\\
F_k\Bigl(G_{p_1}(b_1,\dots,b_{p_1}), G_{p_2}(b_{p_1+1},\dots,b_{p_1+p_2}),\dots,
G_{p_k}(\dots,b_n)\Bigr).
\end{multline*}

3) The identity for the composition will be denoted by $I=(I_n)_{n\in\NN_0}$, and is given by
$$I_n(b_1,\dots,b_n)=\delta_{n1}b_1.$$
We will also identify the constant $1\in\cB$ with the series where only the $0$-th term is different from zero (and which is the identity for the multiplication of the series).

4) In order to avoid too many brackets we will enforce in the following the convention that composition binds stronger than product, i.e.,
$$F\cdot G\circ H\cdot K=F\cdot (G\circ H)\cdot K.$$
\end{definition}

It is quite easy to see that $F$ is invertible with respect to multiplication if and only if the constant term $F_0$ is an invertible element in $\cB$ and that $F$ is invertible with respect to composition if and only if the linear mapping $F_1:\cB\to\cB$ is invertible. For more details on this one should see \cite{D2}.

\begin{definition}
For a random variable $x\in\cA$ we define its \emph{moment series} by
$$\Phi_x=\sum_{n\geq 0} E[xb_1xb_2\cdots xb_nx]=
E[x]+E[xb_1x]+E[xb_1xb_2x]+\cdots$$ 
and its \emph{cumulant series} by
$$C_x=\sum_{n\geq 0} \kk_{n+1}(xb_1,xb_2,\dots,x b_n,x)=\kk(x)+\kk(xb_1,x)+
\kk(xb_1,xb_2,x)+\cdots$$
\end{definition}

The recursive definition \eqref{eq:recursive-cumulants} of the cumulants can then equivalently also be stated as the following relation between those two series:
\begin{equation}\label{eq:C1}
\Phi_x=C_x\circ (I+I\cdot \Phi_x\cdot I)\cdot(1+I\cdot \Phi_x)
\end{equation}
or
\begin{equation}\label{eq:C2}
\Phi_x=(1+ \Phi_x\cdot I)\cdot C_x\circ (I+I\cdot \Phi_x\cdot I)
\end{equation}

\begin{definition}
For a random variable $x\in\cA$ such that $E[x]$ is invertible the \emph{$S$-transform} $S_x$ is defined by
\begin{equation}\label{eq:S}
(1+I)\cdot\chi_x=I\cdot S_x,
\end{equation}
where $\chi_x$ is defined by
\begin{equation}\label{eq:chi}
(I\cdot\Phi_x)\circ \chi_x=I.
\end{equation}
\end{definition}

Note that the invertibility of $E[x]$ is needed in order to have an inverse $\chi_x$ of $I\cdot \Phi_x$.

Let us reformulate this in a form involving the cumulant series. This is the operator-valued version of going from formula (18.13) to formula (18.12) in \cite{NSp}

\begin{lemma}\label{lemma}
We have that
$$(I\cdot C_x)\circ (I\cdot S_x)=I\qquad\text{and}\qquad
(I\cdot S_x)\circ (I\cdot C_x)=I
$$
i.e., $I\cdot S_x$ is the inverse under composition of $I\cdot C_x$.
\end{lemma}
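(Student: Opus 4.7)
The plan is to express $I\cdot\Phi_x$ in the form $(I\cdot C_x)\circ K$ for a convenient series $K$ with $K_0=0$, then compose on the right with $\chi_x$, and finally identify $K\circ\chi_x$ with $I\cdot S_x$ by unwinding \eqref{eq:S}. The convenient choice is
\begin{equation*}
K := I + I\cdot\Phi_x\cdot I,
\end{equation*}
which has $K_0=0$ and factorizes as $K = I\cdot(1+\Phi_x\cdot I) = (1+I\cdot\Phi_x)\cdot I$.

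Multiplying \eqref{eq:C2} on the left by $I$ and collapsing the resulting left factor $I\cdot(1+\Phi_x\cdot I)$ into $K$ gives $I\cdot\Phi_x = K\cdot(C_x\circ K)$. To absorb $K$ into the composition I will record the elementary ``chain-rule-type'' identity
\begin{equation*}
(I\cdot F)\circ H \;=\; H\cdot (F\circ H),
\end{equation*}
valid for any multilinear function series $F$ and any $H$ with $H_0=0$; the scalar analogue is just the fact that substituting $z=H(w)$ in $zF(z)$ yields $H(w)\cdot F(H(w))$. Applied with $F=C_x$ and $H=K$ this converts the previous line into
\begin{equation*}
I\cdot\Phi_x \;=\; (I\cdot C_x)\circ K.
\end{equation*}
Composing on the right with $\chi_x$, invoking associativity of composition and \eqref{eq:chi}, gives $(I\cdot C_x)\circ(K\circ\chi_x)=I$. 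It remains to compute $K\circ\chi_x$. Since composition is right-distributive over addition and product, and since $(I\cdot\Phi_x)\circ\chi_x = \chi_x\cdot(\Phi_x\circ\chi_x) = I$ by \eqref{eq:chi},
\begin{equation*}
K\circ\chi_x \;=\; \chi_x + \chi_x\cdot(\Phi_x\circ\chi_x)\cdot\chi_x \;=\; \chi_x + I\cdot\chi_x \;=\; (1+I)\cdot\chi_x,
\end{equation*}
which by \eqref{eq:S} is exactly $I\cdot S_x$. This proves the first identity $(I\cdot C_x)\circ(I\cdot S_x)=I$. The second identity is then automatic: $I\cdot C_x$ has zero constant term and invertible linear part $b\mapsto b\cdot E[x]$, so it possesses a unique compositional inverse whose one-sided and two-sided versions coincide.

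I expect the main obstacle to be the non-commutative bookkeeping: in the operator-valued setting the ``slot'' $b_1$ that is opened by the leftmost $I$ must be tracked so that the factors produced after composition land on the correct side of the subsequent terms. This appears in both the chain-rule identity $(I\cdot F)\circ H = H\cdot(F\circ H)$ and the expansion of $K\circ\chi_x$. Once this ordering is handled carefully, the rest of the argument is just a short chain of substitutions using \eqref{eq:C2}, \eqref{eq:chi}, and \eqref{eq:S}.
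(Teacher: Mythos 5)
Your argument is correct and is essentially the paper's own proof: both rewrite \eqref{eq:C2} as $I\cdot\Phi_x=(I\cdot C_x)\circ(I+I\cdot\Phi_x\cdot I)$ via the identity $(I\cdot F)\circ H=H\cdot(F\circ H)$ (which the paper uses implicitly and you state explicitly), compose on the right with $\chi_x$, identify the result with $I\cdot S_x$ via \eqref{eq:chi} and \eqref{eq:S}, and dispose of the second identity by the invertibility of the linear part $b\mapsto bE[x]$. The only difference is cosmetic bookkeeping in how $K\circ\chi_x$ is expanded.
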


\begin{proof}
Since the linear term of $I\cdot C_x$ is 
$$(I\cdot C_x)_1(b)=b\kk(x)=b E[x],$$ 
and hence invertible by our assumption on $x$, we know that $I\cdot C_x$ has an inverse under composition. Thus it suffices to check the first equation in the lemma. 

By using \eqref{eq:C2}, we write $I\cdot\Phi_x$ as follows:
\begin{align*}
I\cdot \Phi_x&=(I+ I\cdot\Phi_x\cdot I)\cdot C_x\circ (I+I\cdot \Phi_x\cdot I)\\
&=(I\cdot C_x)\circ [I+I\cdot \Phi_x\cdot I]\\
&=(I\cdot C_x)\circ [(1+I\cdot \Phi_x)\cdot I]
\end{align*}
By composing this with $\chi_x$ on the right we get
$$I=(I\cdot C_x)\circ [(1+I)\cdot \chi_x]=(I\cdot C_x)\circ (I\cdot S_x)
$$
\end{proof}

\section{Twisted multiplicativity of $S$-transform}
Now we are ready to look at the $S$-transform of a product of free variables. For the definition of freeness with amalgamation as well as the definition and basic properties of the corresponding operator-valued cumulants we refer to Section 9 of the book \cite{MSp}. For our purposes the formulas \eqref{eq:Psi1}, \eqref{eq:Psi2}, \eqref{eq:Psi3}, \eqref{eq:Psi4}  below can be taken as the starting point for describing the relation between the moments of $xy$ and the moments of $x$ and of $y$. Those formulas follow quite easily from the combinatorial description of cumulants and the vanishing of mixed cumulants in free variables. We will give an indication in the proof below how this works. 

\begin{theorem}[Dykema \cite{D1}]
Let $x$ and $y$ be free with amalgamation over $\cB$. Assume that both $E[x]$ and $E[y]$ (and thus also $E[xy]=E[x]E[y]$) are invertible elements in $\cB$. Then we have
$$S_{xy}=S_y\cdot S_x\circ(S_y^{-1}\cdot I \cdot S_y).$$
\end{theorem}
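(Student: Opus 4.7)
The plan is to combine the freeness identities \eqref{eq:Psi1}--\eqref{eq:Psi4} for the moment series of $xy$ with the inversion characterisation of $I\cdot S_\bullet$ provided by Lemma~\ref{lemma}. The freeness identities should express $I\cdot\Phi_{xy}$ as a composition $(I\cdot C_x)\circ F$ and symmetrically as $(I\cdot C_y)\circ G$, where $F$ and $G$ are explicit multilinear function series built from $\Phi_y,\Phi_{xy}$ and from $\Phi_x,\Phi_{xy}$. These identities follow from the combinatorial description of cumulants together with the vanishing of mixed cumulants for free variables, applied to the alternating word $xy\cdot b_1\cdot xy\cdots b_{n-1}\cdot xy$ and the resulting nested non-crossing structure on the $x$- and $y$-positions.

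Given these starting identities, I would first compose each of them with $\chi_{xy}$ on the right; the left hand side collapses to $I$ by \eqref{eq:chi}, and, using associativity of composition together with $(I\cdot S_\bullet)\circ(I\cdot C_\bullet)=I$ from the Lemma, the two identities become
$$F\circ\chi_{xy}=I\cdot S_x\qquad\text{and}\qquad G\circ\chi_{xy}=I\cdot S_y.$$
Because $F$ and $G$ are explicit in terms of $\Phi_y$, $\Phi_x$ and $I\cdot\Phi_{xy}$, and because $(I\cdot\Phi_{xy})\circ\chi_{xy}=I$ makes the $\Phi_{xy}$-factors in $F\circ\chi_{xy}$ and $G\circ\chi_{xy}$ collapse, each right hand side becomes a simple product of $\chi_x$, $\chi_y$, $\chi_{xy}$ and $(1+I)$ (using $(1+I)\cdot\chi_\bullet=I\cdot S_\bullet$ to pass between $\chi_\bullet$ and $S_\bullet$). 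Eliminating the auxiliary moment quantities between the two relations then yields a closed formula for $\chi_{xy}$ in terms of $\chi_x$ and $\chi_y$, from which the claimed twisted product formula for $S_{xy}$ follows directly.

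The hard part is the bookkeeping forced by the non-commutativity of $\cB$. In the scalar setting the parallel computation collapses at once to $S_{xy}=S_x\cdot S_y$, since all factors commute. Here, when one has to transport a series built from $S_y$ past the argument slot of $S_x$, the only way the identity can balance is for the variable $I$ that gets fed into $S_x$ to be replaced by its $S_y$-conjugate $S_y^{-1}\cdot I\cdot S_y$, while a leftover $S_y$ is absorbed as a multiplicative prefactor. Tracking at every step which series sits on the left and which on the right of every intermediate $I$, and recognising the emerging pattern as exactly this twist rather than some other rearrangement, is the delicate point that the short proof must navigate.
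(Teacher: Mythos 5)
Your overall strategy (freeness identities for the moment series of $xy$, compose with $\chi_{xy}$, invoke Lemma~\ref{lemma}) is the right one, but the concrete structure you posit for the freeness identities is wrong in a way that hides the entire mechanism of the twist. First, the auxiliary series that actually appear are not built from $\Phi_x$, $\Phi_y$ and $\Phi_{xy}$: one needs the genuinely new mixed series $\Phi^{y\to}=\sum E[yb_1xy\cdots b_nxy]$ and $\Phi^{\to x}=\sum E[xyb_1\cdots xyb_nx]$, and a substantial part of the proof consists of determining how these behave under composition with $\chi_{xy}$, namely $(I\cdot\Phi^{\to x})\circ\chi=I\cdot S_y$ and $\Phi^{y\to}\circ\chi=S_y^{-1}\cdot(1+I)$; the second of these requires playing \eqref{eq:Psi2} and \eqref{eq:Psi3} against each other and is not a formal collapse via \eqref{eq:chi}. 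Your plan simply asserts that these eliminations work out, which is to assume the hard part.

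Second, and more seriously, your symmetric ansatz $I\cdot\Phi_{xy}=(I\cdot C_x)\circ F=(I\cdot C_y)\circ G$ is only half correct. The $y$-expansion \eqref{eq:Psi2} does give $I\cdot\Phi=(I\cdot C_y)\circ(I\cdot\Phi^{\to x})$, because the word $xyb_1xy\cdots$ ends with $y$. But the $x$-expansion \eqref{eq:Psi1} gives only
\begin{equation*}
\Phi^{y\to}\cdot I\cdot\Phi\cdot(\Phi^{y\to})^{-1}=(I\cdot C_x)\circ(\Phi^{y\to}\cdot I),
\end{equation*}
i.e., $I\cdot\Phi$ is a \emph{conjugate} of something of the form $(I\cdot C_x)\circ F$, not such an expression itself. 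This asymmetry between the two expansions is precisely where the conjugation $S_y^{-1}\cdot I\cdot S_y$ in the final formula comes from; if both expansions had the plain compositional form you claim, composing with $\chi_{xy}$ and using the Lemma would force an untwisted conclusion, which is false over a noncommutative $\cB$. So as written the proposal would not produce the theorem: the key identities \eqref{eq:IPsi} and \eqref{eq:Phichi} and the conjugated form of the $C_x$-relation are missing, and these are the content of the proof rather than bookkeeping.
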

One should note that the constant term of $S_y$ is given by $E[y]^{-1}$ and thus $S_y$ has a multiplicative inverse $S_y^{-1}$.
\begin{proof}
We have the  moment series of the product $xy$:
$$\Phi:=\Phi_{xy}=\sum_{n\geq 0} E[xyb_1xyb_2xyb_3\cdots xyb_nxy]=E[xy]+E[xyb_1xy]+\cdots$$ 
In addition we also define the series
$$\Phi^{y\to}=\sum_{n\geq 0}E[yb_1xyb_2xy\cdots b_n xy]=E[y]+E[yb_1xy]+E[yb_1xyb_2xy]+\cdots
$$
and
$$\Phi^{\to x}=\sum_{n\geq 0} E[xyb_1xyb_2\cdots xyb_nx]
=E[x]+E[xyb_1x]+E[xyb_1xyb_2x]+\cdots
.$$
The vanishing of mixed cumulants in free variables yields then directly the relations
\begin{equation}\label{eq:Psi1}
\Phi=C_x\circ (\Phi^{y\to}\cdot I)\cdot \Phi^{y\to}
\end{equation}
\begin{equation}\label{eq:Psi2}
\Phi=\Phi^{\to x}\cdot C_y\circ (I\cdot\Phi^{\to x})
\end{equation}
\begin{equation}\label{eq:Psi3}
\Phi^{y\to}=C_y\circ (I\cdot\Phi^{\to x})\cdot(1+I\cdot \Phi)
\end{equation}
\begin{equation}\label{eq:Psi4}
\Phi^{\to x}=(1+\Phi\cdot I)\cdot C_x\circ (\Phi^{y\to}\cdot I)
\end{equation}
To give an idea how those formulas arise, let us just write the version of \eqref{eq:recursive-cumulants} for the product $xy$, by using the fact that mixed cumulants in $x$ and $y$ vanish. Thus we only have to sum over those situations where all the arguments for the cumulant are $x$, and the $y$ are only showing up in the expectations. In order not to overload the notation, we have ignored the indices of the $b$.
\begin{multline*}
E[xybxyb\cdots bxy]=\sum_{\text{all possibilities}}\\
\kk\Bigl(x E[ybxy\cdots xyb],x E[ybxy
\cdots xy b],\dots,xE[y\cdots xy b],x\Bigr) \cdot E[ybxy\cdots bxy]
\end{multline*}
This formula results then directly in formula \eqref{eq:Psi1} for the corresponding function series. The three other formulas arise in similar ways; note that in \eqref{eq:Psi2} we are using an expansion in cumulants of $y$, and for this the version of \eqref{eq:recursive-cumulants} is more appropriate where in the sum one is not fixing the first $y$ as element in the cumulant, but the last one.  

The rest is now just playing around with those relations in order to rewrite them in terms of the $S$-transforms. 

From \eqref{eq:Psi2} we get
$$
I\cdot\Phi=I\cdot\Phi^{\to x}\cdot C_y\circ (I\cdot\Phi^{\to x})=(I\cdot C_y)\circ (I\cdot\Phi^{\to x})
$$
and thus, with $\chi:=\chi_{xy}$,
$$
I=(I\cdot\Phi)\circ \chi=(I\cdot C_y)\circ (I\cdot\Phi^{\to x})\circ \chi.
$$
Since the inverse under composition of $I\cdot C_y$ is given by $I\cdot S_y$ we see thus that we must have
\begin{equation}\label{eq:IPsi}
(I\cdot \Phi^{\to x})\circ \chi=I\cdot S_y.
\end{equation}

From \eqref{eq:Psi3} and \eqref{eq:Psi2} we get
\begin{equation*}
I\cdot\Phi^{\to x}\cdot\Phi^{y\to}=I\cdot\Phi^{\to x}\cdot C_y\circ (I\cdot\Phi^{\to x})\cdot(1+I\cdot \Phi)
=I\cdot\Phi\cdot (1+I\cdot \Phi)
\end{equation*}
Composing with $\chi$ and using \eqref{eq:IPsi} leads to
$$I\cdot S_y\cdot \Phi^{y\to}\circ \chi=I\cdot (1+I)$$
and thus to
$$S_y\cdot \Phi^{y\to}\circ \chi= (1+I),$$
hence
\begin{equation}\label{eq:Phichi}
\Phi^{y\to}\circ \chi=S_y^{-1}\cdot (1+I).
\end{equation}

Now we use \eqref{eq:Psi1} to calculate
\begin{equation*}
\Phi^{y\to}\cdot I\cdot\Phi=\Phi^{y\to}\cdot I \cdot C_x\circ (\Phi^{y\to}\cdot I)\cdot \Phi^{y\to}=(I\cdot C_x)\circ (\Phi^{y\to}\cdot I)\cdot \Phi^{y\to}
\end{equation*}
and thus, by first multiplying with $(\Phi^{y\to})^{-1}$ from the right and then using Lemma \ref{lemma},
\begin{equation*}
(I\cdot S_x)\circ(\Phi^{y\to}\cdot I\cdot\Phi\cdot (\Phi^{y\to})^{-1})= \Phi^{y\to}\cdot I.
\end{equation*}
Composing again with $\chi=\chi_{xy}$ and using \eqref{eq:Phichi} both for $\Phi^{y\to}$ and its inverse leads now to
$$(I\cdot S_x)\circ(S_y^{-1} \cdot (1+I)\cdot I \cdot (1+I)^{-1}\cdot S_y)=S_y^{-1}\cdot (1+I)\cdot\chi_{xy}
=S_y^{-1}\cdot I\cdot S_{xy}$$
or, by also using that $(1+I)\cdot I \cdot (1+I)^{-1}=I$,
$$S_y^{-1}\cdot I  \cdot S_y\cdot S_x\circ(S_y^{-1}\cdot I \cdot S_y)=S_y^{-1}\cdot I\cdot S_{xy}$$
and thus finally
$$S_y\cdot S_x\circ(S_y^{-1}\cdot I \cdot S_y)=S_{xy}.$$
\end{proof}

\section*{Data Availability}
Data sharing not applicable to this article as no datasets were generated or analysed during the current study.

\end{document}